\newtheorem{proposition}{Proposition}[section]
\newtheorem{theorem}[proposition]{Theorem}
\newtheorem{lemma}[proposition]{Lemma}
\newtheorem{assumption}[proposition]{Assumption}
\newtheorem{remark}[proposition]{Remark}
\newcommand{\mathbbm}{\mathbb} 
\newcommand{\mathscr}{\mathcal}
\DeclareMathOperator*{\argmax}{arg\,max}
\newcommand{\Rb}{\mathbb{R}}
\newcommand{\1}{\mathbbm{1}}
\newcommand{\Fb}{\mathbb{F}}
\newcommand{\Eb}{\mathbb{E}}
\newcommand{\Hc}{\mathcal{H}}
\newcommand{\Lc}{\mathcal{L}}
\newcommand{\Nc}{\mathcal{N}}
\def\half{\textstyle{\frac{1}{2}}}
\newenvironment{tightitemize}{%
    \list{{\textup{$\bullet$}}}{\settowidth\labelwidth{{\textup{\qquad}}}
    \leftmargin\labelwidth \advance\leftmargin\labelsep
    \parsep 0pt plus 1pt minus 1pt \topsep 3pt \itemsep 3pt
    }}{\endlist}
\newenvironment{tightlist}[1]{%
    \list{{\textup{(\roman{enumi})}}}{\settowidth\labelwidth{{\textup{(#1)}}}
    \leftmargin 6pt \advance\leftmargin\labelsep \itemindent \parindent
    \parsep 0pt plus 1pt minus 1pt \topsep 0pt \itemsep 0pt
    \usecounter{enumi}}}{\endlist}
\begin{document}
\title{A Dual Method For Evaluation of Dynamic Risk\\ in Diffusion Processes}
\author{
 Andrzej Ruszczy\'nski\footnote{
Rutgers University, Department of Management Science and Information Systems, Piscataway, NJ 08854, USA, Email: rusz@rutgers.edu}
\and Jianing Yao\footnote{RBC Capital Markets, New York, NY 10281 Email: {yaojn{\_}1@hotmail.com}}}

%
%
\date{}
%

%
%

%
\maketitle

\begin{abstract}
We propose a numerical method for risk evaluation defined by a backward stochastic differential equation. Using dual representation of the risk measure, we convert the risk evaluation to a simple stochastic control problem where the control is a certain Radon-Nikodym derivative process. By exploring the maximum principle, we show that a piecewise-constant dual control provides a good approximation on a short interval. A  dynamic programming algorithm extends the approximation to a finite time horizon. Finally, we illustrate the application of the procedure to financial risk management in conjunction with nested simulation and on a multidimensional
portfolio valuation problem.\\

\noindent
\emph{Subject Classification:} {60J60,60H35,49L20,49M25,49M29}\\
\emph{Keywords:} {Dynamic Risk Measures, Forward--Backward Stochastic Differential Equations, Stochastic Maximum Principle, Financial Risk Management}
\end{abstract}


\section*{Introduction}

The main objective of this paper is to present a simple and efficient numerical method for solving backward stochastic differential equations with convex and homogeneous drivers. Such equations are fundamental modeling tools for continuous-time dynamic risk measures with Brownian filtration, but may also arise in other applications.

The key property of dynamic risk measures is \emph{time-consistency}, which allows for dynamic programming formulations. The discrete time case was extensively explored by Detlfsen and Scandolo \cite{DS1}, Bion-Nadal \cite{BN}, Cheridito \emph{et al.} \cite{CD1,CK1}, F\"{o}llmer and Penner \cite{FP1},
Fritelli and Rosazza Gianin \cite{FR1},
Frittelli and Scandolo \cite{FS},  Riedel \cite{RF}, and Ruszczy\'{n}ski and Shapiro \cite{AR2}.

For the continuous-time case, Coquet,  Hu,  M{\'e}min and Peng \cite{coquet2002filtration} discovered that time-consistent dynamic risk measures, with Brownian filtration, can be represented as solutions of \emph{Backward Stochastic Differential Equations} (BSDE) \cite{PP1}; under mild growth conditions, this is the only form possible. Specifically,
the $y$-part solution of one-dimensional BSDE, defined below, measures the  risk of a variable $\xi_T$ at the current time $t$:
\begin{align}\label{eq:bsde1}
Y_t = \xi_T + \int_t^T g(s, Z_s)\;ds - \int_t^T Z_s\;dW_s, \quad 0\leq t \leq T,
\end{align}
with the driver $g$ being interpreted as a ``risk rate.'' The $\mathcal{F}_T$-measurable random variable $\xi_T$ is usually a function of the terminal state of a certain stochastic dynamical system.

Inspired by that, Barrieu and El Karoui provided a comprehensive study in \cite{BE1,BE2};  further contributions being made by Delbean, Peng, and Rosazza Gianin \cite{delbaen2010representation}, and Quenez and Sulem \cite{quenez2013bsdes} (for a more general model with Levy processes). In addition, application to finance was considered, for example, in \cite{laeven2014robust}. Using the convergence results of Briand, Delyon and M\'emin \cite{BDM}, Stadje \cite{stadje} finds the drivers of BSDE corresponding to discrete-time risk measures.

Motivated by an earlier work on risk-averse control of discrete-time stochastic process \cite{AR4}, Ruszczy\'nski and Yao \cite{AY} formulate a risk-averse stochastic control problem for diffusion processes. The corresponding dynamic programming
equation leads to a decoupled forward--backward system of stochastic differential equations.

While forward stochastic differential equations can be solved by several efficient methods, the main challenge is the numerical solution of \eqref{eq:bsde1}, where $\xi_T$ represents the future value function.
 In particular, Zhang \cite{zjf} and Bouchard and Touzi \cite{touzi} use backward Euler's approximation and regression. Such an approach, however, is not well-suited for risk measurement, because it does not
  preserve the monotonicity of the risk measure.  Alternatively,  {\O}ksendal and Sulem \cite{Oksendal} directly attack continuous-time risk-averse control problem with jumps by deriving sufficient conditions. Algorithms based on maximum principle were investigated by  Ludwig \emph{et al.} \cite{ludwig}. A class of methods using Monte-Carlo approximations were developed by Gobet \emph{et al.} in \cite{gobet1,gobet2}. Using branching processes was explored by Henry-Labordere, Tan, and Touzi \cite{Henry}.

  Our idea is to derive a recursive method based on duality relations in risk-averse dynamic programming, so that the approximation becomes a time-consistent coherent risk measure in discrete time. We believe that exploring such ideas may become advantageous within other algorithmic schemes as well.

The paper is organized as follows. In section \ref{s:problem}, we quickly introduce the concept of a dynamic risk measure and review its properties.  In section \ref{s:dual-control}, we recall the dual representation of a dynamic risk measure and formulate an equivalent stochastic control problem. The optimality condition for the dual control problem, a special form of a maximum principle, are derived in section \ref{s:maximum-principle}. Section \ref{s:estimates} estimates the errors introduced by using
constant processes as dual controls. In section \ref{s:discrete-approximation}, we present the whole numerical method with piecewise-constant dual controls and analyze its rate of convergence. Finally, in section \ref{s:risk-management}, we illustrate the efficacy of our approach on a two-stage risk management model and a multi-dimensional
risk valuation problem for a portfolio.

\section{The Risk Evaluation Problem}
\label{s:problem}
Given a complete filtered probability space $(\varOmega, \mathcal{F}, \mathbb{P})$ with filtration $\{\mathcal{F}_t\}_{t\in [0,T]}$ generated by $d$-dimensional Brownian motion $\{W_t\}_{t\in [0,T]}$, we consider
the following stochastic differential equation:
\begin{equation}
\label{SDE}
dX_t = b(t,X_t)\;dt + \sigma(t,X_t)\; dW_t,\,\,X_0 = x,\quad t\in [0,T],
\end{equation}
with measurable $b:[0,T]\times\Rb^n\to \Rb^n$, and $\sigma:[0,T]\times\Rb^n\to \Rb^n\times\Rb^d$.
We introduce the following notation.
\begin{tightitemize}
\item $\mathbb{E}_{t}[\,\cdot\,] : = \mathbb{E}[\,\cdot\,|\,\mathcal{F}_t]$;
\item $\Lc^2(\varOmega, \mathcal{F}_t, \mathbb{P};\Rb^n)$: the set of $\Rb^n$-valued  $\mathcal{F}_t$-measurable random variables $\xi$ such that $\| \xi \|^2 := \mathbb{E}[\,|\xi|^2\,]<\infty$; for $n=1$, we write it $\Lc^2(\varOmega, \mathcal{F},\mathbb{P})$;
\item $\Hc^{2, n}[t, T]$: the set of $\Rb^n$-valued adapted processes $Y$ on $[t, T]$, such that
$
\| Y \|^2_{\Hc^{2,n}[t, T]} := \mathbb{E}_t\Big[ \int_t^T |Y_s|^2\,d s\Big] < \infty$;
 for $n = 1$ we write it $\Hc^{2}[t, T]$;\footnote{When the norm is clear from the context, the subscripts are skipped.}
\end{tightitemize}

Our intention is to evaluate risk of a terminal cost generated by the forward process \eqref{SDE}:
\begin{equation}
\label{rho}
\rho_{0,T}\big[ \varPhi(X_T)\big],
\end{equation}
where $\varPhi:\Rb^n\to\Rb$, and $\big\{\rho_{s,t}\big\}_{0\le s \le t\le T}$ is a
dynamic risk measure consistent with the filtration $\{\mathcal{F}_t\}_{t\in [0,T]}$. We refer the reader to \cite{PSG1} for a comprehensive discussion on risk measurement and filtration-consistent evaluations.

Special role in the dynamic risk theory is played by
\emph{$g$-evaluations} which are defined by one-dimensional backward stochastic differential equations
of the following form:
\begin{equation}
\label{BSDE}
dY_t = -g(t,Y_t, Z_t)\;dt + Z_t\;dW_t,\quad Y_{T} = {\varPhi}(X_{T}),\,\,t\in [0,T],
\end{equation}
with $\rho^g_{t,T}\big[ \varPhi(X_T)]$ defined to be equal to $Y_t$.

We make the following general assumptions about the functions appearing in the forward--backward system:

\vspace{0.5ex}
\begin{assumption}
 \label{a:c01b}
 {\quad} \vspace{-2.5ex}\\
 \begin{tightlist}{ii}
 \item { The functions $b, \sigma,  \varPhi, g$ are deterministic and uniformly Lipschitz continuous with respect to $x$, $y$, and $z$;}
 \item { The functions $b, \sigma,g$ are uniformly  $\frac{1}{2}$-H\"{o}lder continuous with respect to $t$;}
 \item The dimensions of the Brownian motion and the state process coincide, i.e., $n = d$, and $c>0$ exists, such that
\[
\sigma(t, x)\,\sigma^{\!\top}(t, x) \geq {c}\,\mathbb{I}, \quad \forall (t, x) \in [0, T]\times\mathbb{R}^d.
\]
 \end{tightlist}
\end{assumption}

As proved in \cite{coquet2002filtration} every $\Fb$-consistent nonlinear evaluation, which is dominated by a $g$-evaluation with $g = \mu|y| + \nu|z|$ with some
$\nu, \mu > 0$, is in fact a $g$-evaluation for some $g$; the dominance is understood as follows:
\[
\rho_{0, T}[\xi + \eta] - \rho_{0, T}[\xi] \leq \rho_{0, T}^{\nu, \mu}[\eta]
\]
for all $\xi, \eta\in \Lc^2(\varOmega, \mathcal{F}_T, \mathbb{P})$.

\begin{proposition}
\label{p:nonev}
For all $0\leq t\leq T$ and all $\xi, \xi' \in \Lc_2(\varOmega,\mathcal{F}_T, \mathbb{P})$, the following properties hold:
\begin{tightlist}{iii}
\item \textbf{Generalized constant preservation}: If $\xi\in \Lc_2(\varOmega,\mathcal{F}_t, \mathbb{P})$, then $\rho^g_{t,t}[\,\xi\,]=\xi$;
\item \textbf{Time consistency}: $\rho^g_{s,T}[\,\xi\,] = \rho^g_{s,t}[\,\rho^g_{t,T}[\,\xi\,]\,]$, for all  $0\le s\leq t$;
\item \textbf{Local property}:
 $\rho^g_{t,T}[\,\xi\1_A+\xi'\1_{A^c}\,]= \1_A \rho^g_{t,T}[\,\xi\,]+\1_{A^c} \rho^g_{t,T}[\,\xi'\,]$, for  all $A\in \mathcal{F}_t$.
\end{tightlist}
\end{proposition}

From now on, we focus exclusively on $g$-evaluations as
dynamic risk measures, and we  skip the superscript $g$ in $\rho^g[\cdot]$.

The evaluation of risk is equivalent to the solution of a \emph{decoupled} forward--backward system of stochastic differential equations \eqref{SDE}--\eqref{BSDE}.
An important virtue of this system is its \emph{Markov property}:
\begin{equation}
\label{rho-Markov}
\rho_{t,T}\big[\varPhi(X_T)\big]= v(t,X_t),
\end{equation}
where $v:[0,T]\times\Rb^n\to \Rb$. We have
\begin{equation}
\label{risk-value-function}
v(t,x) = \rho^x_{t,T}\big[ \varPhi(X_T^{t,x})\big],\quad (t,x)\in [0,T]\times\Rb^n,
\end{equation}
where $\{X_s^{t,x}\}$ is the solution of the system \eqref{SDE} restarted at time $t$ from state $x$:
\begin{equation}
\label{SDEtx}
dX_s^{t,x} = b(s,X_s^{t,x})\;ds + \sigma(s,X_s^{t,x})\; dW_s,\quad s\in [t,T],\quad X_t^{t,x} = x,
\end{equation}
and $\rho^x_{t,T}\big[ \varPhi(X_T^{t,x})\big]$ is the (deterministic) value of $Y_t^{t,x}$ in the backward
equation \eqref{BSDE} with terminal condition $\varPhi(X_T^{t,x})$.

Numerical methods for solving forward equations are very well understood (see, e.g., \cite{NSDE}). We focus, therefore, on the backward equation \eqref{BSDE}. So far, a limited number of results are available for this purpose. The most prominent is the Euler method with functional regression  (see, e.g., \cite{mjdouglas, mj, touzi, zjfthesis,zjf,zjf-book}). Our intention is to show that for drivers satisfying additional coherence conditions, a much more effective method can be developed, which exploits time-consistency, duality theory for risk measures, and the maximum principle in stochastic control.

\section{The Dual Control Problem}
\label{s:dual-control}
We further restrict the risk measures under consideration to coherent measures, by making the following additional assumption about the driver~$g$.

\vspace{1ex}
\begin{assumption}
\label{a:riskmeasure}
The driver $g$ satisfies for almost all $t\in [0,T]$ the following conditions:\vspace{0.5ex}
\begin{tightlist}{iii}
\item $g$ is independent of $y$, that is, $g:[0,T]\times \Rb^d\to\Rb$;
\item $g(t, \cdot)$ is convex for all $t\in [0,T]$;
\item $g(t,\cdot)$ is positively homogeneous for all $t\in [0,T]$.
\end{tightlist}
\end{assumption}
\vspace{1ex}

Under these conditions, one can derive further properties of the evaluations $\rho_{t,T}^g[\cdot]$ for $t\in [0,T]$, in addition to the general properties of $\Fb$-consistent nonlinear expectations stated in Proposition \ref{p:nonev}.

\vspace{1ex}
\begin{theorem}
\label{t:geval-prop}
Suppose $g$ satisfies Assumption \ref{a:riskmeasure}. Then the dynamic risk measure $\{\rho_{t,r}\}_{0\le t \le r \le T}$ has the following properties:
\begin{tightlist}{iv}
\item \textbf{Translation Property}: for all $\xi\in \Lc_2(\varOmega, \mathcal{F}_r, \mathbb{P})$ and $\eta\in \Lc_2(\varOmega, \mathcal{F}_t, \mathbb{P})$,
\[
\rho_{t,r}[\xi+\eta] = \rho_{t,r}[\xi]+\eta, \quad \text{a.s.};
\]
\item \textbf{Convexity}: for all $\xi,\xi'\in \Lc_2(\varOmega, \mathcal{F}_r, \mathbb{P})$
and  all $\lambda \in L^\infty(\varOmega, \mathcal{F}_t, \mathbb{P})$ such that \mbox{$0 \leq \lambda \leq 1$},
\begin{equation*}
\rho_{t,r}[\lambda \xi + (1-\lambda)\xi']\leq \lambda \rho_{t,r}[\xi]+(1-\lambda)\rho_{t,r}[\xi'],\quad \text{a.s.}.
\end{equation*}
\item \textbf{Positive Homogeneity}: for all $\xi\in \Lc_2(\varOmega, \mathcal{F}_r, \mathbb{P})$
and all $\gamma \in L^\infty(\varOmega, \mathcal{F}_t, \mathbb{P})$ such that $\gamma \ge 0$,
we have
\[
\rho_{t,r}[\gamma \xi] = \gamma \rho_{t,r}[\xi], \quad \text{a.s.}.
\]
\end{tightlist}
\end{theorem}
\vspace{1ex}

It follows that under Assumption  \ref{a:riskmeasure}, the operators $\{\rho_{t,r}\}_{0\leq t\leq r\leq T}$ constitute a family of \emph{coherent} conditional measures of risks.
Particularly important for us is the  dual representation of the risk measure $\big\{\rho_{t,T}\big\}_{t\in[0,T]}$, which is based on the dual representation of the driver $g$:
\begin{equation}
\label{dual-driver}
g(t,z) = \max_{{a}\in A_t}  {a}  z,
\end{equation}
where $A_t = \partial_z g(t,0)$ is a bounded, convex, and closed set in $\Rb^n$.
Here, and elsewhere, ${a} z$ is understood as the scalar product of ${a}$ and $z$.
The following statement simplifies the results of Barrieu and El Karoui \cite[sec. 7.3]{BE2} to our case.

\vspace{1ex}
\begin{theorem}\label{thm:coherentdual}
Suppose Assumption \ref{a:riskmeasure} is satisfied. Then
\begin{align}
\label{eq:dual}
\rho_{t, T}^g\big[\varPhi(X_T^{t,x})\big]
= \sup_{\mu\in\mathcal{A}_{t,T}}\mathbb{E}\big[\,\varGamma_{t,T} \varPhi(X_T^{t,x})\,\big],
\end{align}
where $\mathcal{A}_{t,T}$ is the space of $A_s$-valued  adapted processes on $[t,T]$, and the process $\big\{\varGamma_{t,s}\big\}_{s\in[t,T]}$ in $\Hc^2[t,T]$ satisfies the stochastic differential equation:
\begin{align}\label{eq:densityprocess}
d\varGamma_{t,s} = \mu_s \varGamma_{t,s}\;dW_s,\quad s\in [t,T],\quad \varGamma_{t,t} = 1.
\end{align}
Moreover, a solution $\hat{\mu}$ of the optimal control problem \eqref{eq:dual}--\eqref{eq:densityprocess} exists.
\end{theorem}

The following lemma provides a useful estimate.

\begin{lemma}
\label{c:Gamma-bound}
A constant $C$ exists, such that for all $0\le t < s \le T$ and all $\{\varGamma_{t,s}\}$ that satisfy \eqref{eq:densityprocess}, we have
\begin{align}
\|\varGamma_{t,s} -1\|^2 \le C(s-t).
\end{align}
\end{lemma}
\begin{proof}
Using It\^{o} isometry, we obtain the chain of relations
\begin{align}
\|\varGamma_{t,s} - 1\|^2 = \int_t^s \|\mu_r\varGamma_{t,r}\|^2\,dr\le \int_t^s \|\mu_r\|^2\|\varGamma_{t,r}\|^2\,dr \le \int_t^s \|\mu_r\|^2\big( 1+ \|\varGamma_{t,r}-1\|^2\big)\,dr.
\end{align}
If $K$ is a uniform upper bound on the norm of the subgradients of $g(r,0)$ we deduce that $\|\varGamma_{t,r} - 1\|^2 \le \psi_{r}$, $r\in [t, T]$, where $\psi$ satisfies the ODE:
$\frac{d\psi_r}{dr} = K^2(1+\psi_r)$, with $\psi_t=0$. Consequently,
\begin{align}
\|\varGamma_{t,r} - 1\|^2 \le \psi_r =  e^{K^2(r-t)} -1.
\end{align}
The convexity of the exponential function yields the postulated bound. 
\end{proof}

\vspace{1ex}
The dual representation theorem allows us to transform the  risk evaluation problem to a stochastic control problem. Our objective now is to approximate the evaluation of risk on a short interval $\Delta$ so as to reduce functional optimization to vector optimization. To proceed, we investigate the corresponding maximum principle of the control problem~\eqref{eq:dual}.

\section{Stochastic Maximum Principle}
\label{s:maximum-principle}
\label{sec:3.1}
In this section, we decipher the optimality conditions of the dual stochastic control problem
\eqref{eq:dual}--\eqref{eq:densityprocess}. Since only the process $\{\varGamma_{t,s}\}_{s\in[t,T]}$  is controlled, the analysis is rather standard.


Suppose $\hat{\mu}$ is the optimal control; then, for any $\mu\in\mathcal{A}_{0,T}$ and $0\leq \alpha \leq 1$, we can form a perturbed control function
\[
\mu^\alpha =  \hat{\mu} + \alpha(\mu - \hat{\mu}).
\]
It is still an element of $\mathcal{A}_{0,T}$, due to the convexity of the sets $A_s$. The processes $\hat{\varGamma}$, $\varGamma$ and $\varGamma^\alpha$ are the state processes under the controls $\hat{\mu}$, $\mu$, and $\mu_\alpha$, respectively.

We linearize the state equation \eqref{eq:densityprocess} about $\hat{\varGamma}$ to get, for $s\in[t,T]$,
\begin{align}\label{eq:linearize}
d\eta_s^\mu = \big[\hat{\mu}_s\eta_s^\mu + \hat\varGamma_{t,s}(\mu_s - \hat{\mu}_s)\big]\;dW_s,\quad \eta^\mu_t = 0.
\end{align}
It is evident that this equation has a unique strong solution  $\eta^\mu\in \Hc^2[t,T]$. Denote
\[
{h}_s^\alpha = \frac{1}{\alpha}\big[ \varGamma^\alpha_{t,s} - \hat\varGamma_{t,s}\big] - \eta_s^\mu,\quad s\in[t,T].
\]
The usefulness of the linearized equation \eqref{eq:linearize} is justified by the following standard result.
\begin{lemma}
\label{lem:est1}
\begin{equation}
\label{hto0}
\lim_{\alpha\rightarrow 0}\sup_{t\le s\le T}\|{h}_s^\alpha\|^2 = 0.
\end{equation}
\end{lemma}
\begin{proof}
We first prove that
\begin{equation}
\label{Gamma-conv}
\lim_{\alpha\rightarrow 0}\sup_{t\le s\le T}\|\varGamma^\alpha_{t,s} - \hat\varGamma_{t,s}\|^2 = 0.
\end{equation}
We have
\begin{equation}
\label{dGamma-dif}
d\big(\varGamma^\alpha_{t,s} - \hat\varGamma_{t,s}\big) = \big(\mu^\alpha_s \varGamma^\alpha_{t,s} - \hat{\mu}_s\hat\varGamma_{t,s}\big)\;dW_s = \big((\mu^\alpha_s -\hat{\mu}_s) \hat\varGamma_{t,s} + {\mu}^\alpha_s(\varGamma^\alpha_{t,s}-\hat\varGamma_{t,s})\big)\;dW_s.
\end{equation}
By It\^{o} isometry,
\begin{multline*}
 \|\varGamma^\alpha_{t,r} - \hat{\varGamma}_{t,r} \|^2  = \int_t^r\| (\mu^\alpha_s -\hat{\mu}_s) \hat\varGamma_{t,s} + {\mu}^\alpha_s(\varGamma^\alpha_{t,s}-\hat\varGamma_{t,s})\|^2 \;ds \\
\le 2\int_t^r\| (\mu^\alpha_s -\hat{\mu}_s) \hat\varGamma_{t,s}\|^2\;ds + 2\int_t^r  \| {\mu}^\alpha_s(\varGamma^\alpha_{t,s}-\hat\varGamma_{t,s})\|^2 \;ds \\
\le 2\int_t^r\| (\mu^\alpha_s -\hat{\mu}_s) \hat\varGamma_{t,s}\|^2\;ds + K \int_t^r  \|\varGamma^\alpha_{t,s}-\hat\varGamma_{t,s}\|^2 \;ds,
\end{multline*}
where $K$ is a constant. Since the first integral on the right hand side converges to 0, as $\alpha\to 0$,
the Gronwall inequality yields \eqref{Gamma-conv}.

We can now prove the assertion of the lemma.
Combining \eqref{dGamma-dif} and \eqref{eq:linearize}, we obtain the stochastic differential equation for ${h}^\alpha$:
\begin{align*}
d{h}_s^\alpha &= 
\bigg\{ \frac{1}{\alpha}\big[(\hat{\mu}_s + \alpha(\mu_s - \hat{\mu}_s))\varGamma^\alpha_{t,s}-\hat{\mu}_s\hat\varGamma_{t,s}\big]-\hat{\mu}_s\eta_s^\mu - \hat\varGamma_{t,s}(\mu_s - \hat{\mu}_s)\bigg\}\;dW_s\\
&= \bigg\{ \frac{1}{\alpha}\hat{\mu}_s\big[ \varGamma^\alpha_{t,s}-\hat\varGamma_{t,s}\big] + (\mu_s-\hat{\mu}_s)\big[\varGamma^\alpha_{t,s}-\hat\varGamma_{t,s}\big] - \hat{\mu}_s\eta_s^\mu \bigg\}\;dW_s
= \bigg\{ \hat{\mu}_s {h}_s^\alpha + (\mu_s-\hat{\mu}_s)\big[\varGamma^\alpha_{t,s}-\hat\varGamma_{t,s}\big]\bigg\}\;dW_s.
\end{align*}
Since the processes $\{\hat{\mu}_s\}$ and $\{\mu_s\}$ are bounded, It\^o isometry yields again
\[
\|{h}_r^\alpha\|^2 \leq K\int_t^{r}  \|{h}_s^\alpha\|^2\;ds
+ K\int_t^{r} \|\varGamma^\alpha_{t,s}-\hat\varGamma_{t,s}\|^2\;ds,
\]
where $K$ is constant. By the Gronwall inequality, using \eqref{Gamma-conv}, we get the desired result.
\end{proof}

The convergence result above directly leads to the following {variational inequality}.

\vspace{1ex}
\begin{lemma}\label{lem:vi} For any $\mu\in \mathcal{A}_{0,T}$ we have
\begin{align}
\label{eq:vi}
\mathbb{E}\big[\,\xi_T\eta_{T}^\mu\,\big]\leq 0.
\end{align}
\end{lemma}
\begin{proof}
Since $\hat{\mu}$ is the optimal control,
\[
 \mathbb{E}\big[\,\xi_T\big(\,\varGamma^\alpha_{t,T} - \hat{\varGamma}_{t,T}\,\big)\,\big]\leq 0.
\]
Relation \eqref{hto0} leads to
\[ \lim_{\alpha\rightarrow 0}\mathbb{E}\Big[\xi_T\frac{1}{\alpha}\big(\varGamma^\alpha_{t,T} - \hat{\varGamma}_{t,T}\big)\Big]
=   \mathbb{E}\big[\xi_T\eta_{T}^\mu\big]\leq 0,
\]
as required.
\end{proof}

We now express the expected value in \eqref{eq:vi} as an integral, to obtain a pointwise variational inequality (the maximum principle). To this end, we introduce the following backward stochastic differential equation (the \emph{adjoint equation}):
\begin{equation}
\label{eq:adjoint}
dp_s =  -\hat{\mu}_s k_s  \,ds + k_s\,dW_s,\,\,p_{T} = \xi_T,\,\,s\in[t,T],
\end{equation}
with $\xi_T = \varPhi\big(X_T^{t,x}\big)$. Recall that $\mu k$ is understood as the scalar product of $\mu$ and $k$.
 The processes $\{p_s\}_{t\le s \le T}$ and $\{k_s\}_{t\le s \le T}$ are
elements of the spaces $\Hc^2[t,T]$ and $\Hc^{2, n}[t, T]$, respectively.

By construction,
$\mathbb{E}\big[\,\xi_T\eta_{T}^\mu\,\big]=\mathbb{E}\big[\,{p}_T\eta_{T}^\mu\,\big]$.
 Consider the product process $\{p_s\eta_s^\mu\}_{s\in[t,T]}$ as in \cite[Cor. 5.6]{XYZ}.
Applying the generalized It\^o formula \cite[Thm.II.5.1]{Ikeda-Watanabe} to this process, we obtain
\[
d(p_s\eta_s^\mu) =  \Big({k}_s\eta_s^\mu + {p}_s
\big[\hat{\mu}_s\eta_s^\mu + \hat{\varGamma}_{t,s}(\mu_s - \hat{\mu}_s)\big]\Big)\;dW_s
 + {k}_s  \hat{\varGamma}_{t,s}(\mu_s - \hat{\mu}_s)\;ds.
\]
If follows that
\begin{equation}
\label{vi:int}
\mathbb{E}\big[\xi_{T}\eta_{T}^\mu\big] = \mathbb{E}\bigg[\,\int_t^{T}  {k}_s\hat{\varGamma}_{t,s}(\mu_s - \hat{\mu}_s)ds\,\bigg].
\end{equation}
We can summarize our derivations in the following version of the maximum principle.
We define the Hamiltonian $H: \Rb\times\Rb^n\times\Rb^n\to\Rb$:
\[
H(\gamma, {a}, \kappa)= \gamma\kappa {a}.
\]

\vspace{1ex}
\begin{theorem}
\label{t:max-principle}
For almost all $s\in [t,T]$, with probability 1,
\[
H( \hat{\varGamma}_{t,s}, \hat{\mu}_s, {k}_s)  = \max_{{a}\in A_s} H( \hat{\varGamma}_{t,s}, {a}, {k}_s).
\]
\end{theorem}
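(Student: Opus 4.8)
The plan is to upgrade the integrated variational inequality \eqref{vi:int} of Lemma~\ref{lem:vi} into the pointwise statement by a contradiction-plus-measurable-selection argument. First I would observe that the integrand in \eqref{vi:int} is exactly a Hamiltonian increment, so that for every admissible $\mu\in\mathcal{A}$,
\[
\mathbb{E}\Big[\int_t^T \big(H(\hat\varGamma_{t,s},\mu_s,k_s) - H(\hat\varGamma_{t,s},\hat\mu_s,k_s)\big)\,ds\Big]\le 0,
\]
since $k_s\hat\varGamma_{t,s}(\mu_s-\hat\mu_s)=H(\hat\varGamma_{t,s},\mu_s,k_s)-H(\hat\varGamma_{t,s},\hat\mu_s,k_s)$. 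Because $\{\hat\varGamma_{t,s}\}$ solves the exponential equation \eqref{eq:densityprocess}, it is strictly positive almost surely, so maximizing $\nu\mapsto H(\hat\varGamma_{t,s},\nu,k_s)=\hat\varGamma_{t,s}(k_s\cdot\nu)$ over the compact convex set $A_s$ is equivalent to maximizing the linear functional $\nu\mapsto k_s\cdot\nu$.

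Next I would set up the contradiction. Define the \emph{bad} set
\[
B=\big\{(s,\omega)\in[t,T]\times\varOmega : H(\hat\varGamma_{t,s},\hat\mu_s,k_s) < \textstyle\max_{\nu\in A_s}H(\hat\varGamma_{t,s},\nu,k_s)\big\},
\]
and suppose, for contradiction, that $B$ has positive $ds\otimes d\mathbb{P}$ measure. Since $A_s=\partial_z g(s,0)$ is deterministic, compact, and convex and $\nu\mapsto k_s(\omega)\cdot\nu$ is continuous, the argmax is nonempty for each $(s,\omega)$; a measurable selection theorem (Kuratowski--Ryll-Nardzewski, or Filippov's lemma) then yields a selection $\nu^\star_s(\omega)\in\argmax_{\nu\in A_s}k_s\cdot\nu$ depending measurably on $k_s$. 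Because $\{k_s\}$ is the progressively measurable martingale-integrand of the adjoint equation \eqref{eq:adjoint} and $A_s$ is deterministic, the process $\nu^\star$ is adapted and $A_s$-valued, hence admissible. I would then define the competitor $\mu_s = \nu^\star_s\,\1_B + \hat\mu_s\,\1_{B^c}$, which lies in $\mathcal{A}$.

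Finally I would evaluate the variational inequality at this $\mu$. The integrand $H(\hat\varGamma_{t,s},\mu_s,k_s)-H(\hat\varGamma_{t,s},\hat\mu_s,k_s)$ is nonnegative everywhere and strictly positive on $B$, so its space--time integral is strictly positive, contradicting the displayed inequality. Therefore $B$ is null, which is precisely the asserted pointwise maximum principle for almost all $s\in[t,T]$ and $\mathbb{P}$-almost surely. The main obstacle is the measurable-selection step: one must verify that the argmax correspondence has nonempty closed values and is measurable, and then produce an \emph{adapted} selection so that the perturbed control $\mu$ is genuinely admissible. The strict positivity of $\hat\varGamma$, which reduces the Hamiltonian maximization to a linear one, together with the determinism of $A_s$, is what keeps this step clean.
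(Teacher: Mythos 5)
Your proof is correct, and it uses the same underlying mechanism as the paper --- patch the optimal control on the offending set and feed the patched competitor into the variational inequality \eqref{eq:vi}/\eqref{vi:int} --- but the two arguments are organized differently in a way worth noting. The paper fixes an \emph{arbitrary} admissible $\mu$, defines $\mathcal{G}=\{(\omega,s): k_s\hat{\varGamma}_{t,s}(\mu_s-\hat{\mu}_s)>0\}$, replaces $\hat\mu$ by $\mu$ only on $\mathcal{G}$, and concludes that $\mathcal{G}$ is null; this yields, for each fixed competitor $\mu$, that $k_s\hat{\varGamma}_{t,s}\mu_s\le k_s\hat{\varGamma}_{t,s}\hat{\mu}_s$ almost everywhere. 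Strictly read, that is a family of statements indexed by $\mu$, and passing to the asserted pointwise maximum over all of $A$ requires one further step the paper leaves implicit: either a measurable selection of the argmax correspondence, or an application to a countable dense family of constant controls $\nu\in A$ combined with continuity of the (linear) Hamiltonian in $\nu$. Your proposal supplies exactly this missing ingredient directly: you build an adapted argmax selection $\nu^\star_s=F(k_s)$ via Kuratowski--Ryll-Nardzewski (legitimate here, since $A$ is compact convex and deterministic and $k$ is progressively measurable, so $F(k_s)$ is progressively measurable and hence admissible), patch it onto the bad set $B$, and obtain a contradiction in one stroke. So your route is slightly more complete than the paper's, at the cost of invoking selection machinery the paper avoids mentioning. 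One small remark: the strict positivity of $\hat{\varGamma}$ (true, since it is the stochastic exponential of a martingale with bounded integrand $\hat\mu$) is a convenience rather than a necessity in your argument --- at any point where $\hat{\varGamma}_{t,s}=0$ all Hamiltonian values coincide and the maximum condition holds vacuously, so such points never enter $B$ in the first place.
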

\begin{proof}
For any $\mu\in \mathcal{A}_{0,T}$, we define the set
\[
\mathcal{G} = \{\,(\omega, s)\in\varOmega\times [t,T] : {k}_s\hat{\varGamma}_{t,s}(\mu_s - \hat{\mu}_s) > 0\,\}.
\]
We construct a new control $\mu^*\in \mathcal{A}_{0,T}$:
\[
\mu^*_s =
\begin{cases}
\mu_s,       & \  (\omega,s)\in \mathcal{G},\\
\hat{\mu}_s,  & \  \text{otherwise.}
\end{cases}
\]
The measurability and adaptedness of $\mu^*$ can be easily verified. It follows from \eqref{eq:vi} and \eqref{vi:int} that
\[
\mathbb{E}\bigg[\,\int_t^{T}{k}_s\hat{\varGamma}_{t,s}(\mu^*_s - \hat{\mu}_s)\;ds\,\bigg]
\le 0.
\]
Then, by the construction of $\mu^*$,
\[
\iint_{\mathcal{G}} {k}_s\hat{\varGamma}_{t,s}(\mu^*_s - \hat{\mu}_s)\;ds\;\mathbb{P}(d\omega)\leq 0.
\]
Since the integrand is positive on $\mathcal{G}$, the product measure of $\mathcal{G}$ must be zero.
\end{proof}

\begin{remark}
\label{r:dual-dual}
Observe that due to Theorem \ref{t:max-principle}, we have
\[
\hat{\mu}_s k_s  = \max_{{a} \in A_s} {a} k_s = g(s,k_s).
\]
Therefore,  $p\equiv Y$ and $k\equiv Z$ solve the adjoint equation \eqref{eq:adjoint}, which becomes identical to the BSDE \eqref{BSDE}.
The optimal dual control is
\[
\hat{\mu}_s  = \argmax_{{a} \in A_s}\,{a} Z_{s}.
\]
\end{remark}

\section{Error Estimates for Constant Controls on Small Intervals}
\label{s:estimates}

To reduce an infinite dimensional control problem to a finite dimensional vector optimization,  we partition the interval $[0,T]$ into $N$ short pieces of length $\Delta  = T/N$, and  develop a scheme for evaluating the risk measure \eqref{rho} by using constant dual controls
on each piece. We denote $t_i = i\Delta  $, for $i=0,1, \dots , N$.

For simplicity, in addition to Assumption \ref{a:riskmeasure}, we assume that the driver $g$ does not depend on time, and thus all sets $A_t = \partial g(0)$ are the same. We denote them with the symbol~$A$; as we shall see later, it is not a major restriction.

If the system's state at time $t_i$ is $x$,  then the value of the risk measure \eqref{risk-value-function} is then the optimal value of problem \eqref{eq:dual}.
By dynamic programming,
\[
v(t_i,x) = \rho^x_{t_i,t_{i+1}}\big[ v\big(t_{i+1},X_{t_{i+1}}^{t_i,x}\big)\big].
\]
The risk measure $\rho^x_{t_i,t_{i+1}}[\,\cdot\,]$ is defined by problem \eqref{eq:dual}, with
terminal time $t_{i+1}$ and the function $\varPhi(\cdot)$ replaced by $v(t_{i+1},\,\cdot\,)$.
Equivalently, it is equal to $Y_{t_i}^{t_i,x}$, in the corresponding forward--backward system
on the interval $[t_i,t_{i+1}]$:
\begin{align}
dX_s^{t_i,x} &= b(s,X_s^{t_i,x})\;ds + \sigma(s,X_s^{t_i,x})\; dW_s,\quad X_{t_i}^{t_i,x} = x,
\label{SDEtxi}\\
dY_s^{t_i,x} &= -g(Z_s^{t_i,x})\;ds + Z_s^{t_i,x}\;dW_s, \quad  Y_{t_{i+1}}^{t_i,x} = v\big(t_{i+1},X_{t_{i+1}}^{t_i,x}\big).\label{BSDEtxi}
\end{align}
Under Assumption \ref{a:c01b}, the function $v(\cdot,\cdot)$ is the  viscosity solution of the associated
Hamilton--Jacobi--Bellman equation:
\begin{equation}
\label{HJB-full}
v_t(t,x) + v_x(t,x)b(t,x)  + {\half} \text{\rm tr}\big( v_{xx}(t,x)\sigma(t,x)\sigma^T(t,x)\big)+g\big(v_x(t,x)\sigma(t,x)\big) = 0,
\end{equation}
with the terminal condition $v(T,x)=\varPhi(x)$;   see, \emph{e.g.}, \cite[sec. 5.5]{zjf-book}.

Suppose
we use a constant control in the interval $[t_i,t_{i+1}]$:
\begin{align}\label{eq:def}
\mu_s := \hat{\mu}_{t_i} = \argmax_{{a} \in A}\,{a}  Z_{t_i}^{t_i,x} ,\quad\forall s\in[t_i,t_{i+1}],
\end{align}
where $(Y^{t_i,x},Z^{t_i,x})$ solve the BSDE \eqref{BSDEtxi} in this interval (which is the adjoint equation \eqref{eq:adjoint}).
We still use $\hat{\Gamma}$ to denote the state evolution under the optimal control, while  $\Gamma$ is the process under control $\mu$ defined in \eqref{eq:def}.

Our objective is to show that a constant $C$ exists, independent of $x$, $N$, and $i$, such that
the approximation error on the $i$th interval can be bounded as follows:
 \begin{equation}
 \label{approx-error}
 0 \le \Eb\big[v\big(t_{i+1},X_{t_{i+1}}^{t_i,x}\big)
 \big(\hat{\Gamma}_{t_{i},t_{i+1}}-\Gamma_{t_{i},t_{i+1}}\big)\big]
 \le C \Delta^{\frac{3}{2}}.
 \end{equation}
The fact that we do not know $Z_{t_i}^{t_i,x}$ will not be essential;
later, we shall generate even better constant controls by discrete-time dynamic programming.

We can now derive some useful estimates for the constant control function \eqref{eq:def}.

\begin{lemma}\label{lem:estimates1} A constant $C$ exists, such that for all $x$, $N$ and $i$
\begin{align}
\label{eq:est1}
\mathbb{E}\bigg[v\big(t_{i+1},X_{t_{i+1}}^{t_i,x}\big)\int_{t_{i}}^{t_{i+1}}(\hat{\mu}_s - \mu_s)\hat{\Gamma}_{t_{i},s}\;dW_s\,\bigg] \le C \Delta^{\frac{3}{2}}.
\end{align}
\end{lemma}

\begin{proof}
For simplicity, we write $Y_s$ for $Y^{t_i,x}_s$ and $Z_s$ for $Z^{t_i,x}_s$. From \eqref{BSDEtxi} we get:
\begin{equation}
\label{xiT-integral}
v\big(t_{i+1},X_{t_{i+1}}^{t_i,x}\big) = Y_{t_{i}}
- \int_{t_i}^{t_{i+1}}  g(Z_s) \,ds + \int_{t_i}^{t_{i+1}} Z_s\,dW_s.
\end{equation}
Then the left hand side of \eqref{eq:est1} can be written as follows:
\begin{equation}
\label{two-terms}
\begin{aligned}
\lefteqn{\mathbb{E}\bigg[\Big(Y_{t_{i}} - \int_{t_{i}}^{t_{i+1}} g(Z_t)\;dt + \int_{t_{i}}^{t_{i+1}} Z_t\;dW_t\Big)\int_{t_{i}}^{t_{i+1}}(\hat{\mu}_s - \mu_s)\hat{\Gamma}_{t_{i},s}\;dW_s\,\bigg]}\\
&=-\Eb\bigg[ \int_{t_{i}}^{t_{i+1}} g(Z_t)\;dt \int_{t_{i}}^{t_{i+1}}(\hat{\mu}_s - \mu_s)\hat{\Gamma}_{t_{i},s}\;dW_s\,\bigg]
 +  \Eb\bigg[ \int_{t_{i}}^{t_{i+1}} Z_t\;dW_t \int_{t_{i}}^{t_{i+1}}(\hat{\mu}_s - \mu_s)\hat{\Gamma}_{t_{i},s}\;dW_s\,\bigg].
\end{aligned}
\end{equation}
The first term on the right hand side of \eqref{two-terms} can be bounded by the Cauchy-Schwarz inequality and the It\^{o} isometry:
\begin{align*}
\lefteqn{-\Eb\bigg[ \int_{t_{i}}^{t_{i+1}} g(Z_t)\;dt\int_{t_{i}}^{t_{i+1}}(\hat{\mu}_s - \mu_s)\hat{\Gamma}_{t_{i},s}\;dW_s\,\bigg]}\quad\\
&\le
\left(\Eb\bigg[ \bigg(\int_{t_{i}}^{t_{i+1}} g(Z_s)\;ds\bigg)^2\bigg]\right)^{\frac{1}{2}}\left( \Eb\bigg[ \bigg(\int_{t_{i}}^{t_{i+1}} (\hat{\mu}_s - \mu_s)\hat{\Gamma}_{t_{i},s}\;dW_s\bigg)^2\bigg]\right)^{\frac{1}{2}}\\
 & =
\bigg\|\int_{t_{i}}^{t_{i+1}} g(Z_s)\;ds\bigg\| \; \bigg(\int_{t_{i}}^{t_{i+1}} \Eb\Big[ (\hat{\mu}_s - \mu_s)^2\hat{\Gamma}_{t_{i},s}^2\Big] \;ds\bigg)^{\frac{1}{2}}\\
&\le
 C_1 \int_{t_{i}}^{t_{i+1}} \| Z_s\|\;ds \; \bigg(\int_{t_{i}}^{t_{i+1}} \Eb\Big[ (\hat{\mu}_s - \mu_s)^2\hat{\Gamma}_{t_{i},s}^2\Big] \;ds\bigg)^{\frac{1}{2}}
\le  C_2 \Delta^{\frac{3}{2}},
\end{align*}
where $C_1$ and $C_2$ are some constants.  In the last inequality, we used the fact that $g(\cdot)$ is Lipschitz and
a uniform bound on $\|Z_s\|$ exists; see, \emph{e.g.}, \cite[sec. 5.2]{zjf-book}.

The second term on the right hand side of \eqref{two-terms} can be estimated in a similar way:
\begin{align*}
\lefteqn{\Eb\bigg[ \int_{t_{i}}^{t_{i+1}} Z_t\;dW_t \int_{t_{i}}^{t_{i+1}}(\hat{\mu}_s - \mu_s)\hat{\Gamma}_{t_{i},s}\;dW_s\,\bigg]
=
\Eb\bigg[ \int_{t_{i}}^{t_{i+1}} Z_s(\hat{\mu}_s - \mu_s)\hat{\Gamma}_{t_{i},s}\;ds\,\bigg]}\quad\\
&\le
 \bigg(\int_{t_{i}}^{t_{i+1}}\Eb\big[ | Z_s|^2\big]\;ds\bigg)^{\frac{1}{2}} \; \bigg(\int_{t_{i}}^{t_{i+1}} \Eb\Big[ (\hat{\mu}_s - \mu_s)^2\hat{\Gamma}_{t_{i},s}^2\Big] \;ds\bigg)^{\frac{1}{2}}
\le C_3 \Delta^{\frac{3}{2}},
\end{align*}
where $C_3$ is a sufficiently large constant.
\end{proof}
\vspace{1ex}

We also have the estimate below:

\vspace{1ex}
\begin{lemma}\label{lem:estimates2} A constant $C$ exists, such that for all $x$,  $N$, and $i$
\begin{align}\label{eq:est2}
\mathbb{E}\bigg[v\big(t_{i+1},X_{t_{i+1}}^{t_i,x}\big)\int_{t_{i}}^{t_{i+1}}(\hat{\Gamma}_{t_{i},s} - \Gamma_{t_{i},s})\mu_s\;dW_s\bigg]   \le C \Delta^{\frac{3}{2}}.
\end{align}
\end{lemma}

\begin{proof}
We proceed as in the proof of the previous lemma. We use \eqref{xiT-integral} and express the left hand side of \eqref{eq:est2} as follows:
\begin{equation}
\label{two-terms2}
\begin{aligned}
\lefteqn{\mathbb{E}\bigg[v\big(t_{i+1},X_{t_{i+1}}^{t_i,x}\big)\int_{t_{i}}^{t_{i+1}}(\hat{\Gamma}_{t_{i},s} - \Gamma_{t_{i},s})\mu_s\;dW_s\bigg]}\\
&=-\Eb\bigg[ \int_{t_{i}}^{t_{i+1}} g(Z_t)\;dt \int_{t_{i}}^{t_{i+1}}(\hat{\Gamma}_{t_{i},s} - \Gamma_{t_{i},s})\mu_s\;dW_s\,\bigg]
+  \Eb\bigg[ \int_{t_{i}}^{t_{i+1}} Z_t\;dW_t \int_{t_{i}}^{t_{i+1}}(\hat{\Gamma}_{t_{i},s} - \Gamma_{t_{i},s})\mu_s\;dW_s\,\bigg].
\end{aligned}
\end{equation}
The first term on the right hand side of \eqref{two-terms2} can be dealt with by the Cauchy-Schwarz inequality and {It\^o isometry}, exactly as before:
\begin{align*}
\lefteqn{\left| \Eb\bigg[ \int_{t_{i}}^{t_{i+1}} g(Z_s)\;ds \int_{t_{i}}^{t_{i+1}}(\hat{\Gamma}_{t_{i},s} - \Gamma_{t_{i},s})\mu_s\;dW_s\,\bigg]\,\right|}\\
&\le \left(\Eb\bigg[ \bigg(\int_{t_{i}}^{t_{i+1}} g(Z_s)\;ds\bigg)^2\bigg]\right)^{\frac{1}{2}}
\left(\int_{t_{i}}^{t_{i+1}}\Eb\big[ (\hat{\Gamma}_{t_{i},s} - \Gamma_{t_{i},s})^2|\mu_s|^2\big]\;ds
\right)^{\frac{1}{2}} \le  C_1 \Delta^{\frac{3}{2}}.
\end{align*}
To estimate the second term, consider two controlled dual processes:
\[
\hat{\Gamma}_{t_{i},t} = 1 + \int_{t_{i}}^{t} \hat{\mu}_s\hat{\Gamma}_{t_{i},s}\;dW_s,\qquad
\Gamma_{t_{i},t} = 1 + \int_{t_{i}}^{t}\mu_s\Gamma_{t_{i},s}\;dW_s.
\]
Taking the difference yields,
\begin{equation}
\label{Gamma-dif}
\hat{\Gamma}_{t_{i},t} - \Gamma_{t_{i},t} = \int_{t_{i}}^{t} (\hat{\mu}_s - \mu_s)\hat{\Gamma}_{t_{i},s}\;dW_s + \int_{t_{i}}^{t}(\hat{\Gamma}_{t_{i},s} - \Gamma_{t_{i},s})\mu_s\;dW_s.
\end{equation}
By It\^{o} isometry,
\[
\Eb\big[ (\hat{\Gamma}_{t_{i},t} - \Gamma_{t_{i},t})^2\big]  \le
2 \int_{t_{i}}^{t} \Eb\big[(\hat{\mu}_s - \mu_s)^2\hat{\Gamma}_{t_{i},s}^2\big]\;ds
+ 2 \int_{t_{i}}^{t}\Eb\big[(\hat{\Gamma}_{t_{i},s} - \Gamma_{t_{i},s})^2\mu_s^2\big]\;ds.
\]
By Lemma \ref{c:Gamma-bound} and the boundedness of the processes $\{\mu_t\}$ and $\{\hat{\mu}_t\}$, a constant $C_2$ exists, such that
\[
\Eb\big[ (\hat{\Gamma}_{t_{i},t} - \Gamma_{t_{i},t})^2\big] \le C_2|t-t_{i}|.
\]
Thus, we can write the bound:
\begin{multline*}
\left| \Eb\bigg[ \int_{t_{i}}^{t_{i+1}} Z_t\;dW_t \int_{t_{i}}^{t_{i+1}}(\hat{\Gamma}_{t_{i},s} - \Gamma_{t_{i},s})\mu_s\;dW_s\,\bigg]\,\right|
=\left| \Eb\bigg[ \int_{t_{i}}^{t_{i+1}} Z_s(\hat{\Gamma}_{t_{i},s} - \Gamma_{t_{i},s})\mu_s\;ds\bigg]\,\right|\\
 \le C_1 \left(\int_{t_{i}}^{t_{i+1}} \Eb\big[|Z_s|^2\big]\;ds\right)^{\frac{1}{2}}
\left(\int_{t_{i}}^{t_{i+1}} \Eb\big[ (\hat{\Gamma}_{t_{i},s}-\Gamma_{t_{i},s})^2\big] \;ds\right)^{\frac{1}{2}} \le  C \Delta^{\frac{3}{2}},
\end{multline*}
where $C$ is a sufficiently large constant.
\end{proof}
\vspace{1ex}

We can now compare the value of the functional \eqref{eq:dual} with the value achieved by a constant control $\mu$.

\vspace{1ex}
\begin{theorem}\label{thm:main}
Suppose Assumptions \ref{a:c01b}
and \ref{a:riskmeasure} are satisfied. Then
a constant $C$ exists, independent on $x$, $N$ and $i$, such that inequality \eqref{approx-error} holds.
\end{theorem}
\begin{proof}
Using \eqref{Gamma-dif}, we obtain
\begin{multline*}
\Eb\big[v\big(t_{i+1},X_{t_{i+1}}^{t_i,x}\big)
 \big(\hat{\Gamma}_{t_{i},t_{i+1}}-\Gamma_{t_{i},t_{i+1}}\big)\big]\\
 =\Eb\left[v\big(t_{i+1},X_{t_{i+1}}^{t_i,x}\big)\int_{t_{i}}^{t_{i+1}} (\hat{\mu}_s - \mu_s)\hat{\varGamma}_{t_i,s}\;dW_s\right]
 + \Eb\left[v\big(t_{i+1},X_{t_{i+1}}^{t_i,x}\big)\int_{t_{i}}^{t_{i+1}}(\hat{\varGamma}_{t_i,s} - \varGamma_{t_i,s})\mu_s\;dW_s\right].
\end{multline*}
Combining the estimates from Lemmas \ref{lem:estimates1} and \ref{lem:estimates2}, we obtain the
postulated result.
\end{proof}
\vspace{1ex}

 An even smaller error than \eqref{approx-error} can be achieved by choosing the \emph{best} constant control in the interval $[t_i,t_{i+1}]$. For a constant $\mu_t\equiv{a}$, where ${a}\in A$, the dual state equation \eqref{eq:densityprocess} has a closed-form solution, the exponential martingale:
\[
\varGamma_{t_i,t} = \exp\Big({a} (W_t-W_{t_i}) -\frac{t-t_i}{2}|{a}|^2\Big), \quad t\in [t_i,t_{i+1}].
\]
It follows that an $\mathcal{O}\big(\Delta^{\frac{3}{2}}\big)$ approximation of the risk measure can be obtained by solving the following simple vector optimization problem:
\begin{equation}
\label{tilde-rho}
\tilde{\rho}^x_{t_i,t_{i+1}}\big[ v\big(t_{i+1},X_{t_{i+1}}^{t_i,x}\big)\big] := \max_{{a}\in A}\Eb\Big[v\big(t_{i+1},X_{t_{i+1}}^{t_i,x}\big) \exp\Big({a} (W_{t_{i+1}}-W_{t_i}) -\frac{\Delta}{2}|{a}|^2\Big)\Big].
\end{equation}
Opposite to \eqref{eq:def}, we do not need to know $Z_{t_i}$ to solve this problem.

By Theorem \ref{thm:main},
\begin{equation}
\label{one-step-error}
 0 \le   v(t_{i},x) -
\tilde{\rho}^x_{t_i,t_{i+1}}\big[ v\big(t_{i+1},X_{t_{i+1}}^{t_i,x}\big)\big] \le C\Delta^{\frac{3}{2}}.
\end{equation}

By construction, the approximating measure of risk $\tilde{\rho}^x_{t_i,t_{i+1}}[\,\cdot\,]$ is coherent and satisfies all properties  (i)-(iii) of
Theorem~\ref{t:geval-prop}.

\section{Discrete-Time Approximations by Dynamic Programming}
\label{s:discrete-approximation}

The time-consistency of dynamic risk measure leads to the nested form below:
\begin{equation}
\label{eq:composition}
\rho_{0, T}\big[\,\varPhi(X_T)\,\big] = \rho_{t_0, t_1}\bigg[\rho_{t_1, t_2}\Big[ \dots
\rho_{t_{N-2}, t_{N-1}}\Big[\rho_{t_{N-1}, t_N}\big[\varPhi(X_T)\big]\Big]\dots\Big]\bigg].
\end{equation}
By using optimal constant dual controls on each interval $[t_i,t_{i+1})$, we may approximate this composition by dynamic programming. For $i=N$ we define $\tilde{v}_N(x)=\varPhi(x)$. Then, for $i=N-1,N-2,\dots,0$, and for $x\in \Rb^n$, we restart the diffusion \eqref{SDE} from $x$ at time $t_i$ as in \eqref{SDEtxi}.
Having obtained $X^{t_i,x}_{t_{i+1}}$, we can calculate the approximate risk measure \eqref{tilde-rho} on the interval
$[t_i,t_{i+1}]$:
\begin{equation}
\label{tildeV}
\tilde{v}_i(x) = \tilde{\rho}^x_{t_{i}, t_{i+1}}\big[\tilde{v}_{i+1}\big(X^{t_i,x}_{t_{i+1}}\big)\big]
=\max_{{a}\in A}\Eb\Big[\tilde{v}_{i+1}\big(X^{t_i,x}_{t_{i+1}}\big)  \exp\Big({a} (W_{t_{i+1}}-W_{t_i}) -\frac{\Delta}{2}|{a}|^2\Big)\Big].
\end{equation}

\begin{theorem}
Suppose Assumptions \ref{a:c01b} and \ref{a:riskmeasure} are satisfied. Then a constant $C$ exists, such that
for all $N$ and $x$ we have:
\begin{equation}
\label{error-bounds}
 0 \le  v(t_i,x) - \tilde{v}_i(x) \le C (N-i) \Delta^{\frac{3}{2}},\quad i=0,1,\dots,N.
\end{equation}
In particular,
$ 0 \le  v(0,x) -\tilde{v}_0(x) \le CT \Delta^{\frac{1}{2}}$.
\end{theorem}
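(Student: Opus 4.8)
The plan is to establish \eqref{error-bounds} by backward induction on $i$, propagating the single-interval estimate \eqref{one-step-error} through the dynamic programming recursion \eqref{tildeV} and relying on the monotonicity and translation invariance of the approximating risk measure $\tilde\rho$. For the base case $i=N$ both $v(t_N,\cdot)$ and $\tilde v_N(\cdot)$ coincide with $\varPhi$, so the left-hand side vanishes and \eqref{error-bounds} holds with the zero factor $N-N$.

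For the inductive step I would assume that $v(t_{i+1},y)-\tilde v_{i+1}(y)\le C(N-i-1)\Delta^{\frac{3}{2}}$ for all $y\in\Rb^n$, with the constant $C$ taken from \eqref{one-step-error}, and then insert the intermediate quantity $\tilde\rho^x_{t_i,t_{i+1}}\big[v(t_{i+1},X_{t_{i+1}}^{t_i,x})\big]$ to decompose
\begin{multline*}
v(t_i,x)-\tilde v_i(x)
=\Big(v(t_i,x)-\tilde\rho^x_{t_i,t_{i+1}}\big[v(t_{i+1},X_{t_{i+1}}^{t_i,x})\big]\Big)\\
+\Big(\tilde\rho^x_{t_i,t_{i+1}}\big[v(t_{i+1},X_{t_{i+1}}^{t_i,x})\big]-\tilde v_i(x)\Big).
\end{multline*}
The first bracket is exactly the one-step error and is bounded by $C\Delta^{\frac{3}{2}}$ thanks to \eqref{one-step-error}. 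For the second bracket I would substitute the state $X_{t_{i+1}}^{t_i,x}$ into the inductive hypothesis to obtain the pointwise inequality $v(t_{i+1},X_{t_{i+1}}^{t_i,x})\le \tilde v_{i+1}(X_{t_{i+1}}^{t_i,x})+C(N-i-1)\Delta^{\frac{3}{2}}$ between $\mathcal{F}_{t_{i+1}}$-measurable random variables, and apply $\tilde\rho^x_{t_i,t_{i+1}}$ to both sides. Since this measure is monotone and, being coherent, invariant under the addition of the deterministic constant $C(N-i-1)\Delta^{\frac{3}{2}}$ (the translation property of Theorem \ref{t:geval-prop}), and since by \eqref{tildeV} one has $\tilde\rho^x_{t_i,t_{i+1}}\big[\tilde v_{i+1}(X_{t_{i+1}}^{t_i,x})\big]=\tilde v_i(x)$, the second bracket is at most $C(N-i-1)\Delta^{\frac{3}{2}}$. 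Adding the two contributions yields $v(t_i,x)-\tilde v_i(x)\le C(N-i)\Delta^{\frac{3}{2}}$, which closes the induction. The final assertion then follows by taking $i=0$ and using $N\Delta=T$, so that $CN\Delta^{\frac{3}{2}}=C(T/\Delta)\Delta^{\frac{3}{2}}=CT\Delta^{\frac{1}{2}}$.

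The main obstacle, and the reason the accumulated error stays at order $(N-i)\Delta^{\frac{3}{2}}$ rather than growing multiplicatively across the $N$ stages, lies entirely in this propagation step: one must guarantee that passing a uniformly close terminal value through $\tilde\rho$ does not amplify the gap. This is precisely where coherence is indispensable---monotonicity ensures the inequality is preserved, while the unit-mass property of the exponential densities appearing in \eqref{tilde-rho} (equivalently, the translation property) guarantees that a uniform additive error is transmitted unchanged rather than rescaled. A minor but necessary bookkeeping point is that the constant $C$ supplied through \eqref{one-step-error} by Theorem \ref{thm:main} is genuinely independent of $x$, $N$, and $i$, and that $v$ and each $\tilde v_i$ are bounded (as $\varPhi$ is bounded and coherent risk measures preserve boundedness), so that all the expectations and the suprema over $x$ implicit in the uniform estimates are well defined.
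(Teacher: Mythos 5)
Your proof is correct and takes essentially the same route as the paper's: backward induction from the base case $i=N$, splitting $v(t_i,x)-\tilde{v}_i(x)$ at the intermediate term $\tilde{\rho}^x_{t_i,t_{i+1}}\big[v\big(t_{i+1},X_{t_{i+1}}^{t_i,x}\big)\big]$, bounding one piece by the one-step estimate \eqref{one-step-error} and the other by the inductive hypothesis propagated through $\tilde{\rho}$ via monotonicity and the translation property, then setting $N\Delta=T$ for the final bound. If anything, your write-up is more explicit than the paper's terse argument, which compresses the monotonicity and unit-mass (exponential-martingale) observations into the single phrase ``by the translation property.''
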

\begin{proof}
The result follows by backward induction. It is obviously true for $i=N$. If it is true for
$i+1$, we can easily verify it for $i$. By the translation property of
$\tilde{\rho}^x_{t_{i}, t_{i+1}}[\,\cdot\,]$ and \eqref{one-step-error} we obtain:
\begin{align*}
v(t_{i},x) - \tilde{v}_i(x)
&=
v(t_{i},x) - \tilde{\rho}^x_{t_{i}, t_{i+1}}\big[\tilde{v}_{i+1}\big(X^{t_i,x}_{t_{i+1}}\big)\big] \\
&\le
v(t_{i},x) - \tilde{\rho}^x_{t_i,t_{i+1}}\big[ v\big(t_{i+1},X_{t_{i+1}}^{t_i,x}\big)\big] + C \Delta^{\frac{3}{2}}
\le C (N-i) \Delta^{\frac{3}{2}},
\end{align*}
as required.  Nonnegativity of the error follows from the fact that piecewise-constant control is suboptimal in the dual problem.
\end{proof}

In practice, the forward process \eqref{SDEtx} is simulated in an approximate way, for example, by \emph{Euler's method}:
\begin{equation}
\label{SDEtx-app}
\tilde{X}^{t_i,x}_{t_{i+1}} \approx x + b(t_i,x)\,\Delta + \sigma(t_i,x)\, \Delta W,\  \Delta W \sim N(0, \Delta\mathbb{I}).
\end{equation}
It is well known that for small $\Delta$, the error of this Euler scheme is $\mathcal{O}\big(\Delta^\frac{1}{2}\big)$. Since $\tilde{X}^{t_i,x}_{t_{i+1}}$ is a normal random vector, streamlined calculation of the risk measure is possible. Denoting by $\Nc$  a standard normal random vector
with independent components, we can simplify the calculation
of the risk measure in \eqref{tildeV} as follows:
\begin{equation}
\label{tildeVapp}
\tilde{v}_i(x) \approx
\max_{{a}\in A}\Eb\Big[\tilde{v}_{i+1}\big(x+b(t_i,x)\Delta+ \sigma(t_i,x)\Nc \Delta^{\frac{1}{2}}  \big)
\exp\Big({a} \Nc \Delta^{\frac{1}{2}}-\half|{a}|^2\Delta\Big)\Big].
\end{equation}
Observe that the same normal random vector $\Nc$ is used in both terms of this expression.

\vspace{1ex}
\begin{remark} Our earlier assumption of time-homogeneity of $g$  is barely a restriction after discretization, because $g$ can be $\frac{1}{2}$-H\"older continuous between the grid points. As long as the risk aversion does not change abruptly, the numerical method developed can be easily adapted to the case of a time-dependent driver.
\end{remark}

\begin{remark} Following an insightful observation of a Referee, we provide a further approximation of \eqref{tildeVapp} for  small $\Delta$. The main purpose of this approximation is to gain insight into the virtues
of our method. Expanding $\tilde{v}_{i+1}(\cdot)$ at $x$ and the exponent function at 0,
and skipping terms of order higher than $\Delta$, we obtain
\begin{align*}
\tilde{v}_i(x) &\approx
\max_{{a}\in A}\Eb\Big[\Big(\tilde{v}_{i+1}\big(x) +  \big(\tilde{v}_{i+1}\big)_{\!x} (x)\,\big(b(t_i,x)\Delta+ \sigma(t_i,x)\Delta^{\frac{1}{2}}\Nc\big)\\
 &\hspace{5
 em} +  {\half}\Nc^T\sigma(t_i,x)^T\big(\tilde{v}_{i+1}\big)_{xx} (x)\,\sigma(t_i,x)\Nc  \Delta \Big)
 \Big( 1 + {a} \Delta^{\frac{1}{2}}  \Nc   -\frac{\Delta}{2}|{a}|^2 + {\half} ({a}\Nc)^2 \Delta \Big)
 \Big] \\
 &\approx \tilde{v}_{i+1}\big(x) + \big(\tilde{v}_{i+1}\big)_{\!x} (x) \,b(t_i,x)\Delta
 + {\half} \text{\rm tr}\big[\big(\tilde{v}_{i+1}\big)_{\!xx} (x)\,\sigma(t_i,x)\,\sigma^T(t_i,x)\big]\Delta\\
  & \qquad + \max_{{a}\in A}\Big[ \big(\tilde{v}_{i+1}\big)_{\!x}\sigma(t_i,x){a}^T \Delta \Big].
\end{align*}
With a view at \eqref{dual-driver}, the last equation can be rewritten as follows
\[
 \frac{\tilde{v}_{i}\big(x) - \tilde{v}_{i+1}(x)}{\Delta} \approx
 \big(\tilde{v}_{i+1}\big)_{\!x} (x) \,b(t_i,x)
 + {\half} \text{\rm tr}\big[\big(\tilde{v}_{i+1}\big)_{\!xx} (x) \,\sigma(t_i,x) \,\sigma^T(t_i,x)\big]
 + g\big[ \big(\tilde{v}_{i+1}\big)_{\!x}(x)\,\sigma(t_i,x) \big].
\]
We see that our recursive method \eqref{tildeVapp}  follows the HJB equation \eqref{HJB-full} very closely, but we can implement it
without assuming the existence of the derivatives of the approximate value function or using them in any direct or implicit way.
\end{remark}

\section{Applications}
\label{s:risk-management}

We present two numerical examples illustrating the application and performance of the dual method.

\subsection{Two-Stage Valuation}

After the credit crunch, the management of risk is an increasingly important function of any financial institution. The primary goal is to have sufficient capital reserves against potential losses in the future. Such risk management is divided into two stages: \emph{scenario generation} and \emph{portfolio re-pricing}.

Scenario generation refers to the construction of sample paths over a given time horizon. This is also called the \emph{outer stage}, where {Monte Carlo simulation} is used to generate paths
of systems governed by stochastic differential equations. Repricing of portfolio amounts to the computation of the portfolio value at a certain time horizon. The portfolio may consist of derivative securities with nonlinear payoffs that, in conjunction with financial models, require Monte Carlo simulation for this \emph{inner stage} as well (see figure \ref{fig:twostage}). Thus, in real world application, the risk measurement requires calculation of a two-level nested Monte Carlo simulation. Lastly, the risk evaluation is done by a risk measure $\rho$, a functional that maps future random exposure to a real number. Examples of risk measure can be \emph{value at risk}, \emph{conditional value at risk}, \emph{probability of loss}, \emph{etc}. Such evaluation structure leads to a challenging computation task. Especially, the inner step simulation has to be done for each scenario generated in the first stage. A lot of research has been done to address the computation issue, to name a few, Gordy and Juneja \cite{Gordy}, Lee and Glynn \cite{Lee}, or Lesnevski \emph{et al.} \cite{Les1, Les2}.

\begin{figure}
\label{fig:twostage}
\center
\vspace{-2ex}
\includegraphics[width = \linewidth]{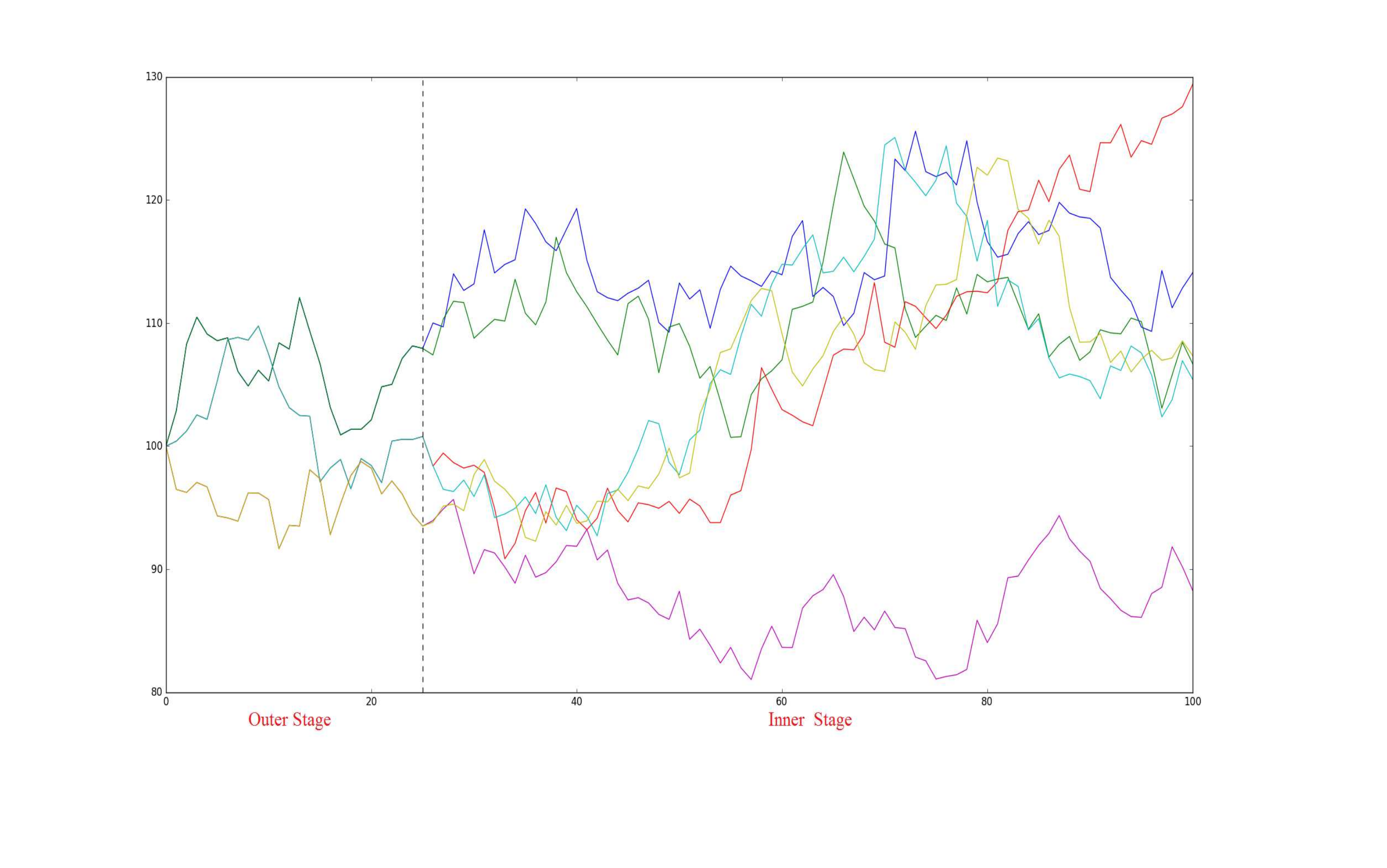} \vspace{-8ex}
\caption{The two stage procedure: the  $x$-axis is time, the $y$-axis is the stock price.}
\end{figure}

The common objective is measuring the risk of a portfolio of assets at the risk horizon $t = \tau$, while standing at time~$0$. We denote the current wealth, \emph{i.e.}, the net present value of portfolio, by  $X_0$ (a known quantity). At time $\tau$, the value of the portfolio is then a $\mathcal{F}_\tau$-measurable random variable $X_\tau$. In almost all real-world applications, we assume a probabilistic model of the evolution of uncertainty between $0$ and $\tau$, for example, a stochastic differential equation. Suppose the outcome $\varOmega$ is a set of possible future scenarios, each of which incorporates sufficient information so as to determine all asset prices at the risk horizon. Then, in each scenario $\omega\in\varOmega$, the portfolio has value $X_\tau(\omega)$. The \emph{mark-to-market (MTM)} loss of this portfolio at time $\tau$ in scenario $\hat{\omega}\in\varOmega$ is given by
\begin{align}
L(\hat{\omega}) = X_0 - X_\tau(\hat{\omega}).
\end{align}
The usual risk measurement is static in the sense that it evaluates the risk of exposure at risk horizon only at current time~$0$. If one wants to check the risk at an intermediate point of the risk horizon, the whole model has to be re-run. Given the computation efforts of nested simulation, it can be very burdensome. In addition, re-simulating can cause inconsistency of risk evaluation, which is also undesired.

In our work, we use dynamic risk measure and the approximation algorithm \eqref{tildeV} proposed in
section~\ref{s:discrete-approximation}, to measure the risk associated with the portfolio dynamically. In this way, the risk can be monitored continuously and consistently; in other words, for any time instant $t$ within the risk horizon, the evolution of risk can be traced.

To better illustrate the dynamic risk evaluation, let us consider a specific example, a portfolio consisting of a long position in a single vanilla put option, which expires at time $T$ and has strike price $K$. The underlying stock, say \emph{ABC}, follows a geometric Brownian motion with an initial price $S_0$, mean $\mu$ and volatility $\sigma $; under the real-world probability measure $\mathbb{P}$, its dynamics is given by the following SDE:
\begin{align}\label{eq:stock}
\frac{dS_t}{S_t} = b\;dt + \sigma\; dW_t, \quad t\in[0, T].
\end{align}
Here, $\{W_t\}$ is $\mathbb{P}$-Brownian motion. Let us also set a flat interest rate $r$; therefore, under the risk-neutral pricing framework, we have the stock dynamics:
\begin{align}\label{eq:riskneutral}
\frac{dS_t}{S_t} = r\;dt + \sigma\; d\widetilde{W}_t,
\end{align}
where $\widetilde{W}_t$ is a $\mathbb{Q}$-Brownian motion. With these specifications, the initial value of the put can be easily calculated by the \emph{Black-Scholes (BS)} formula, which yields
\[
\mathcal{P}(0, S_0)  := \textit{BS}(0, S_0, \sigma, K, T)
 =  S_0N(d_+(T, S_0)) - KD(0, T)N(d_-(T, S_0));
\]
here $N$ stands for the standard normal cumulative distribution function and
\begin{align*}
d_+ (\tau, x) & = \frac{1}{\sigma\sqrt{\tau}}\big[\ln\frac{x}{K} + (r+{\half}\sigma^2)\tau\big], \\
d_- (\tau, x) & = \frac{1}{\sigma\sqrt{\tau}}\big[\ln\frac{x}{K} + (r - {\half}\sigma^2)\tau\big].
\end{align*}
Let us fix a risk horizon $\tau$, denote the price at the risk horizon as $S_\tau(\omega)$. Then, the exposure (MTM) at time $\tau$ is the difference of initial put price $\mathcal{P}(0, S_0)$ and the risk-neutral price of the option at time $\tau$, i.e.,
\begin{align}\label{eq:exposure}
\varPhi(S_\tau(\omega)) := \mathcal{P}(0, S_0) - \Eb^{\mathbb{Q}}\big[\,(S_T - K)^+\,\big|\, S_\tau(\omega)\big].
\end{align}
Here, to get $S_\tau(\omega)$, we have to simulate the path of stock under the real-world measure, \emph{i.e.}, according to \eqref{eq:stock}. Then, to compute the right-hand side, we only need to work out the second term; again, it can be computed analytically by the BS formula. It is well known that the loss function $\varPhi(\cdot)$ is Lipschitz in the state. We are now in the situation to apply a dynamic risk measure,
\begin{align}
\rho_{t, \tau}^g\big[\,\varPhi(S_\tau)\,\big] := Y_t,\quad \text{where} \quad Y_t = \varPhi(S_\tau) + \int_t^\tau g(s, Z_s)\;ds - \int_t^\tau Z_s\;dW_s,\quad t\in[0, \tau],
\end{align}
which enables us to view the risk at any time $t\in[0,\tau]$.

As for the implementation details, instead of using Monte Carlo simulation, we use a tree model for the outer stage. Our risk evaluation algorithm \eqref{tildeVapp} reduces the functional optimization to vector optimization at every time discretization step. However, since backward induction has to be implemented, the state space also needs to be discretized, which makes the tree structure appealing.

\begin{remark} For a diversified portfolio, we shall do a multi-dimensional outer loop simulations, because multiple assets are involved. Also, more sophisticated underlying dynamics is possible, such as  stochastic volatility, local volatility model, in which cases Monte Carlo simulation has to be performed. At the risk horizon, all exposures should be netted before  the $g$-evaluation by our algorithm for BSDEs.
\end{remark}

We now present  the numerical results based on the following data:
$K = 95$, $T = 0.75$, $S_0 = 100$, $\mu = 0.08$, $\sigma = 0.2$, $r = 0.03$, $\tau = 0.2$.
For the risk evaluation, we specify the driver to be:
\[
g(z) = \gamma \|\max\{z \Nc, 0\}\|_p, \quad \Nc\sim N(0, 1),
\]
where the parameters $\gamma>0$ and $p\ge 1$ model risk aversion. The corresponding dual set  is then:
\[
\mathcal{A} = \partial g(0) = \{\,l\in\mathbb{R}^n_+ : |l|_q \le \gamma k\,\},
\]
with $1/p + 1/q=1$, and
\[
k=
\begin{cases}
\frac{1}{\sqrt{2}}(2m(2m-1)\cdots (m+1))^\frac{1}{2m}, & \quad \text{if } p = 2m,\\
(2^m\sqrt{2}\pi m!)^\frac{1}{2m+1},  & \quad \text{if } p = 2m + 1.\\
\end{cases}
\]
Fix $p = 2$, at time $0$, given $\mathcal{F}_\tau$-measurable loss $\varPhi(\cdot)$ in \eqref{eq:exposure}. Table \ref{tab:convergence table} and Figure \ref{fig:convergence} summarize the valuation when varying the step size and risk tolerance $\gamma$. We can observe convergence of the numerical method,  as the step size decreases, uniformly over the whole range of $\gamma$.

\begin{table}[htbp]
  \centering
  \caption{Risk Valuation Convergence Table}
    \begin{tabular}{r|rrrrrr}
    {step size} & \multicolumn{1}{l}{$\gamma = 0.1$} & \multicolumn{1}{l}{$\gamma = 0.3$} & \multicolumn{1}{l}{$\gamma = 0.4$} & \multicolumn{1}{l}{$\gamma = 0.6$} & \multicolumn{1}{l}{$\gamma = 0.8$} & \multicolumn{1}{l}{$\gamma = 1.0$} \\
    \hline
\midrule
    0.4   & 0.00000 & 0.00000 & 0.00000 & 0.00000 & 0.00000 & 0.00000 \\
    0.2   & 0.03407 & 0.23907 & 0.34080 & 0.54207 & 0.73957 & 0.93217 \\
    0.1   & 0.06371 & 0.37300 & 0.52628 & 0.82895 & 1.12492 & 1.41239 \\
    0.08  & 0.07086 & 0.40174 & 0.56573 & 0.88956 & 1.20628 & 1.51403 \\
    0.05  & 0.08261 & 0.44695 & 0.62757 & 0.98446 & 1.33392 & 1.67410 \\
    0.04  & 0.08687 & 0.46282 & 0.64924 & 1.01771 & 1.37878 & 1.73064 \\
    0.02  & 0.09622 & 0.49671 & 0.69544 & 1.08872 & 1.47500 & 1.85268 \\
    0.01  & 0.10165 & 0.51579 & 0.72141 & 1.12877 & 1.52971 & 1.92284 \\
    0.008 & 0.10287 & 0.51998 & 0.72712 & 1.13760 & 1.54184 & 1.93852 \\
    0.005 & 0.10485 & 0.52674 & 0.73632 & 1.15187 & 1.56152 & 1.96407 \\
    0.004 & 0.10557 & 0.52919 & 0.73965 & 1.15705 & 1.56869 & 1.97344 \\
    0.002 & 0.10720 & 0.53465 & 0.74709 & 1.16864 & 1.58483 & 1.99465 \\
    0.001 & 0.10822 & 0.53798 & 0.75163 & 1.17574 & 1.59479 & 2.00786 \\
    0.0008 & 0.10845 & 0.53876 & 0.75269 & 1.17740 & 1.59713 & 2.01099 \\
    0.0005 & 0.10886 & 0.54007 & 0.75447 & 1.18021 & 1.60109 & 2.01630 \\
    0.0004 & 0.10901 & 0.54057 & 0.75515 & 1.18127 & 1.60260 & 2.01833 \\
    \end{tabular}%
  \label{tab:convergence table}%
\end{table}%

\begin{figure}
\center
\vspace{1ex}
\includegraphics[width = 0.8\linewidth]{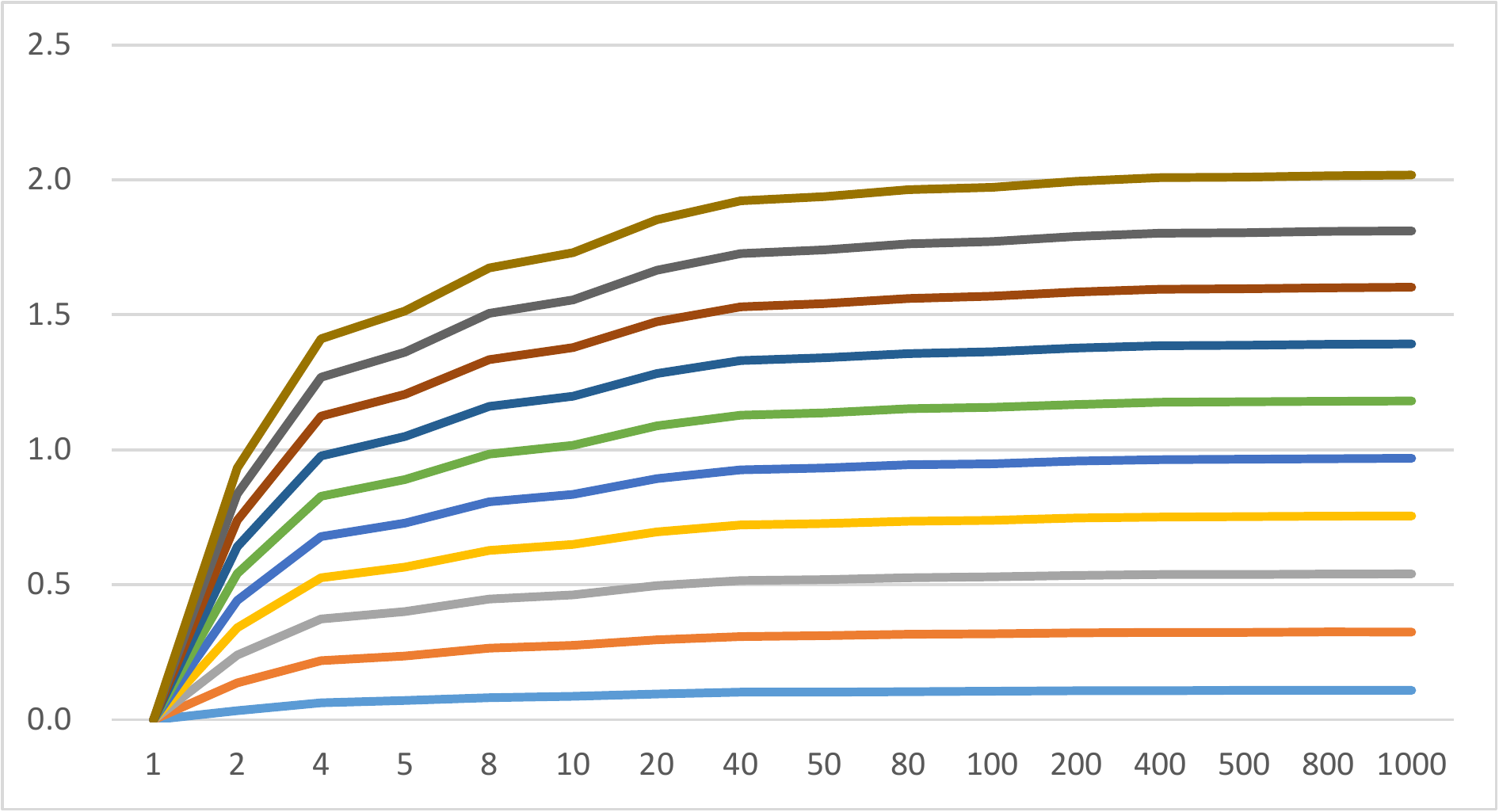}
\caption{Convergence of the discretization method for $\gamma$ ranging from $0.1$ (bottom graph)
to  $1.0$ (top graph); the $x$-axis represents the number of time steps, while the $y$-axis is the risk estimate.}
\label{fig:convergence}
\end{figure}

If we vary the underlying asset's volatility as well as the strike price of the contract, we can construct the \emph{risk surface}. As Table \ref{tab:risksurface} and Figure \ref{fig:risksurface} show, the risk is plotted against different combinations of volatility $\sigma$ and strike price $K$.

\begin{table}[htbp]
  \centering
  \caption{Risk Surface Table}
    \begin{tabular}{rrrrrrrrr}
    \multicolumn{1}{l}{$K$, $\sigma$} & 0.1   & 0.3   & 0.5   & 0.6   & 0.7   & 0.8   & 0.9   & 1 \\
    \midrule
    70    & -0.0002 & -0.1479 & -0.0901 & 0.0566 & 0.2612 & 0.5101 & 0.7918 & 1.0962 \\
    80    & -0.0041 & -0.0188 & 0.2444 & 0.4645 & 0.7290 & 1.0279 & 1.3517 & 1.6918 \\
    90    & 0.0737 & 0.3661 & 0.7508 & 1.0114 & 1.3099 & 1.6379 & 1.9869 & 2.3489 \\
    100   & 0.7941 & 1.0211 & 1.4004 & 1.6691 & 1.9782 & 2.3177 & 2.6780 & 3.0506 \\
    110   & 2.4089 & 1.8858 & 2.1561 & 2.4081 & 2.7100 & 3.0475 & 3.4087 & 3.7833 \\
    120   & 3.9179 & 2.8641 & 2.9802 & 3.2007 & 3.4842 & 3.8108 & 4.1655 & 4.5361 \\
    130   & 4.6752 & 3.8600 & 3.8387 & 4.0233 & 4.2831 & 4.5938 & 4.9376 & 5.3003 \\
    \end{tabular}%
  \label{tab:risksurface}%
\end{table}%

\begin{figure}
\center
\includegraphics[width = 0.8\linewidth]{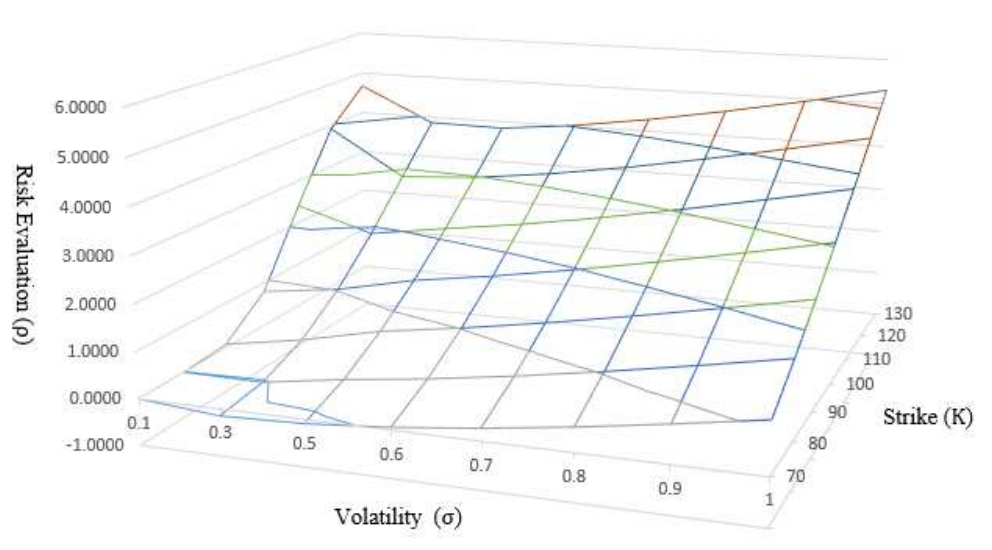}
\caption{The dependence of risk on the strike price and volatility.}
\label{fig:risksurface}
\end{figure}

As we can observe, if the stock \emph{ABC} becomes volatile, the risk of the portfolio should increase, because volatility implies uncertainty. Moreover, since the current stock price is $100$, as the strike price increases, the risk also goes up, which indicates being engaged in an out-of-money trade is riskier than at-the-money or in-the-money. Thus, the risk surface constructed coincides with intuition, which validates the risk evaluation approximation.

\subsection{Risk-Averse Portfolio Pricing}

We consider $n$ stock prices $\big\{S^{(i)}_t\big\}$, $i=1,\dots,n$, following the  system of stochastic differential equations
\begin{align}\label{eq:stockn}
\frac{dS^{(i)}_t}{S^{(i)}_t} = b^{(i)}\;dt + \sigma^{(i)}\; dW_t, \quad t\in[0, T].
\end{align}
Here, $\{W_t\}$ is an $n$-dimensional Brownian motion, $b^{(i)}$ is the drift of stock $i$, and $\sigma^{(i)}$ is the
$n$-dimensional (row) vector of volatility coefficients of stock $i$. For simplicity, we assume that
these coefficients are constant, but nothing in our calculations hinges on that.

At time $T$, a random function
$\varPhi(S_T)$ is evaluated. In our example, it is the value of a portfolio of European call options,
$\varPhi(S_T) = \sum_{i=1}^n x_i \max \big(S_T^{(i)}-K^{(i)},0\big)$, where $x_i$ is the number of option $i$ held, and
$K^{(i)}$ is the strike price. Of course, any other function of $S_T$ is possible here as well.

Let the risk free rate be $r$.
Using the multi-dimensional Girsanov Theorem, we change the real-world probability measure $\mathbb{P}$ to the risk-neutral probability measure $\mathbb{Q}$, and the stock prices follow
the system:
\begin{align}\label{eq:stocknQ}
\frac{dS^{(i)}_t}{S^{(i)}_t} = r\;dt + \sigma^{(i)}\; d\widetilde{W}_t, \quad t\in[0, T].
\end{align}
In this equation, $\widetilde{W}_t$ is an $n$-dimensional Brownian motion with independent components under $\mathbb{Q}$. All calculations are
performed un\-der~$\mathbb{Q}$. The valuation equation is the BSDE:
 \begin{equation}
 \label{int-bsde}
Y_t=\varPhi(S_T)+\int_t^T [-r Y_t +g(Z_\tau)]\;d\tau-\int_t^T Z_\tau \;d\widetilde{W}_r, \quad t \in [0,T].
\end{equation}
If $g \equiv 0$, this reduces to the standard arbitrage-free valuation:
\[
Y_t = e^{-r(T-t)}\Eb^{\mathbb{Q}}\big[ \varPhi(S_T)\big],
\]
but in the risk-averse case the BSDE \eqref{int-bsde} must be solved.

We first simulate the forward process \eqref{eq:stocknQ}.
We apply $n$-dimensional symmetric random walk to approximate the independent Brownian motions. Specifically, we use binomial tree data structure in $n$ dimensions to monitor the stock prices at all lattice points at each discrete time step $t_i=i\Delta_t$, $i=0,1,2,\dots,N$, where  $t_N =T$.

Having the terminal prices for all the stocks, it is easy to generate the terminal states for BSDE part.
To evaluate the dynamic risk in the process, we set the driver $g(z) = \gamma \|z\|$, which satisfies the assumptions for the dual method. Here, $\gamma >0$  is the level of risk aversion. The corresponding dual set (subdifferential) is
$A=\{{a} \;|\;\left\|{a} \right\| \leq \gamma\}$.

In our problem, $\rho^g_0[\varPhi(S_T)]$ is the risk-adjusted price at time 0 for the seller, which means that the seller, given his risk aversion, is willing to sell the option at that price. On the other hand, $-\rho^g_0[-\varPhi(S_T)]$ represents the risk-adjusted price at time 0 for the buyer. The prices are different, of course, because the risk measure is not linear:
$-\rho^g_0[-\varPhi(S_T)] < \rho^g_0[\varPhi(S_T)]$.

In the dual method, at each node of the lattice, the optimization problem \eqref{tildeVapp} was solved by the
gradient method with projection (see, e.g. \cite[Sec. 6.1.1]{ruszczynski2006nonlinear}), which has an easy closed-form representation, due to the fact that $A$ is just a ball.
The normal distribution was represented by the $2^n$ points of the $n$-dimensional random walk.

For comparison, we also used the $\theta$-method of \cite{touzi}, with $\theta = 1/2$. Both methods
were applied to evaluate the buyer's price on an example with $n=d=5$ and
\begin{gather*}
S_0 = K = [50,60,70,80,90],\ r=0.03, \ \gamma = 0.5,\ x= (0.2,0.2,0.2,0.2,0,2),\\
{\vspace{-1ex}}\\
\sigma = \begin{bmatrix}
0.1 & 0.2 & 0.3 & -0.2 & 0.15 \\
0.3 & 0.1 & -0.15 & 0.3 & 0.22\\
0.2 & 0.15 & 0.2 & 0.5 & 0.1 \\
0.1 & 0.3 & 0.2 & -0.1 & 0.05\\
-0.13& 0.3 & 0.1 & 0.1 & -0.15
\end{bmatrix}.
\end{gather*}
The results are depicted in Figure \ref{f:portfolio5}. The horizontal axis is the number of discretization points $N$.

\begin{figure}[htbp]
\vspace{-5ex}
\centering
\includegraphics[width=0.7\textwidth]{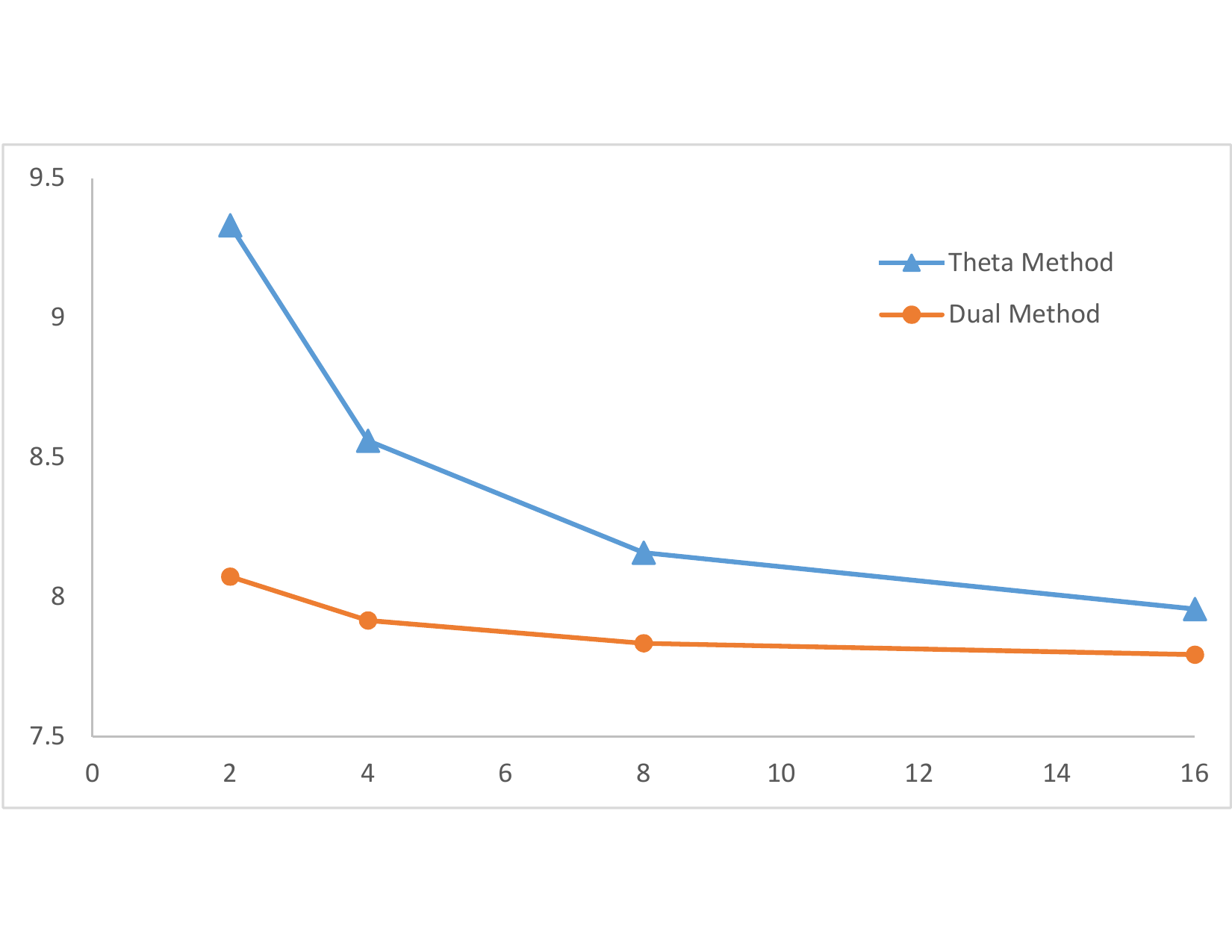}
\vspace{-5ex}
\caption{Convergence of the Dual Method (lower graph) and the $\theta$-Method (upper graph) for the buyer, when $\gamma$ = 0.5.}.
\label{f:portfolio5}
\end{figure}

We can see from the figure that the dual method exhibits much faster convergence than the $\theta$-method.
This is due to the fact that it exploits the convexity of the driver in the dual optimization step \eqref{tildeVapp},
thus minimizing the numerical error accumulated.

The results have been borrowed from extensive tests carried out in \cite{Hu2018} for the buyer's and seller's prices
for several examples with different dimensions and different risk aversion coefficient $\gamma$.
In all experiments reported there, the pattern is very similar
to Figure \ref{f:portfolio5}: the dual method significantly outperforms the $\theta$-method. We believe
that for convex drivers, the dual method is a very attractive numerical approach to backward stochastic differential equations.

\section*{Acknowledgments}
The authors thank Yuanhan Hu and Bo Ni for the permission to use the results of the second example.
The authors acknowledge the Office of Advanced Research Computing (http://oarc.rutgers.edu) at Rutgers, The State University of New Jersey, for providing access to the Amarel cluster and associated research computing resources that have contributed to the results reported here.

\end{document}